\newtheorem{theorem}{Theorem}
\newtheorem{definition}{Definition}
\newtheorem{lemma}{Lemma}
\newtheorem{problem}{Problem}
\newcommand{\sign}{\mathrm{sign}}
\title{All paths admit trajectoids}
\author{Péter L. Várkonyi\footnote{Department of Mchanics, Materials, and Structures, Budapest University of Technology, and Economics, Hungary. E-mail: varkonyi.peter@epk.bme.hu}}
\begin{document}

\maketitle

\begin{abstract}
In a recent paper published in Nature, Y.I. Sobolev and coauthors introduced the concept of trajectoids: convex, rigid objects, which roll without slip or spin on a flat plane along a prescribed periodic, unbounded planar path. A geometric construction method applicable to many paths was introduced, and the theory was experimentally verified using objects rolling downwards on slightly inclined planes. The construction method was applicable to many but not all curves. A possible extension of the method (referred to as period-$n$ trajectoids) was also proposed, but the limits of applicability were not clarified. Here, a geometric proof is given for the existence of period-$n$ trajectoids for any sufficiently smooth prescribed curve. A somewhat different proof was recently proposed by \cite{muller2023non} independently from this work. We also highlight some related geometry problems.
\end{abstract}

\section{Background}

In a recent paper \citet{sobolev2023solid} investigated the problem of constructing a convex (possibly inhomogeneous) solid, which 
\begin{enumerate}
\item rolls without slip or spin on a flat plane $P$ in such a way that the center of mass remains at constant height above the plane.
\item and follows a prescribed periodic planar curve $T$ embedded in $P$ with discrete translation symmetry. 
\end{enumerate}
Here the \emph{curve followed by a rolling object} should be understood as the curve traced by the normal projection of the center of mass to plane $P$ as the object moves.  Slip means non-zero tangential velocity of the contact point(s) and spin means a non-zero normal component of the angular velocity.

\begin{figure} \centering
        \includegraphics[height=4cm]{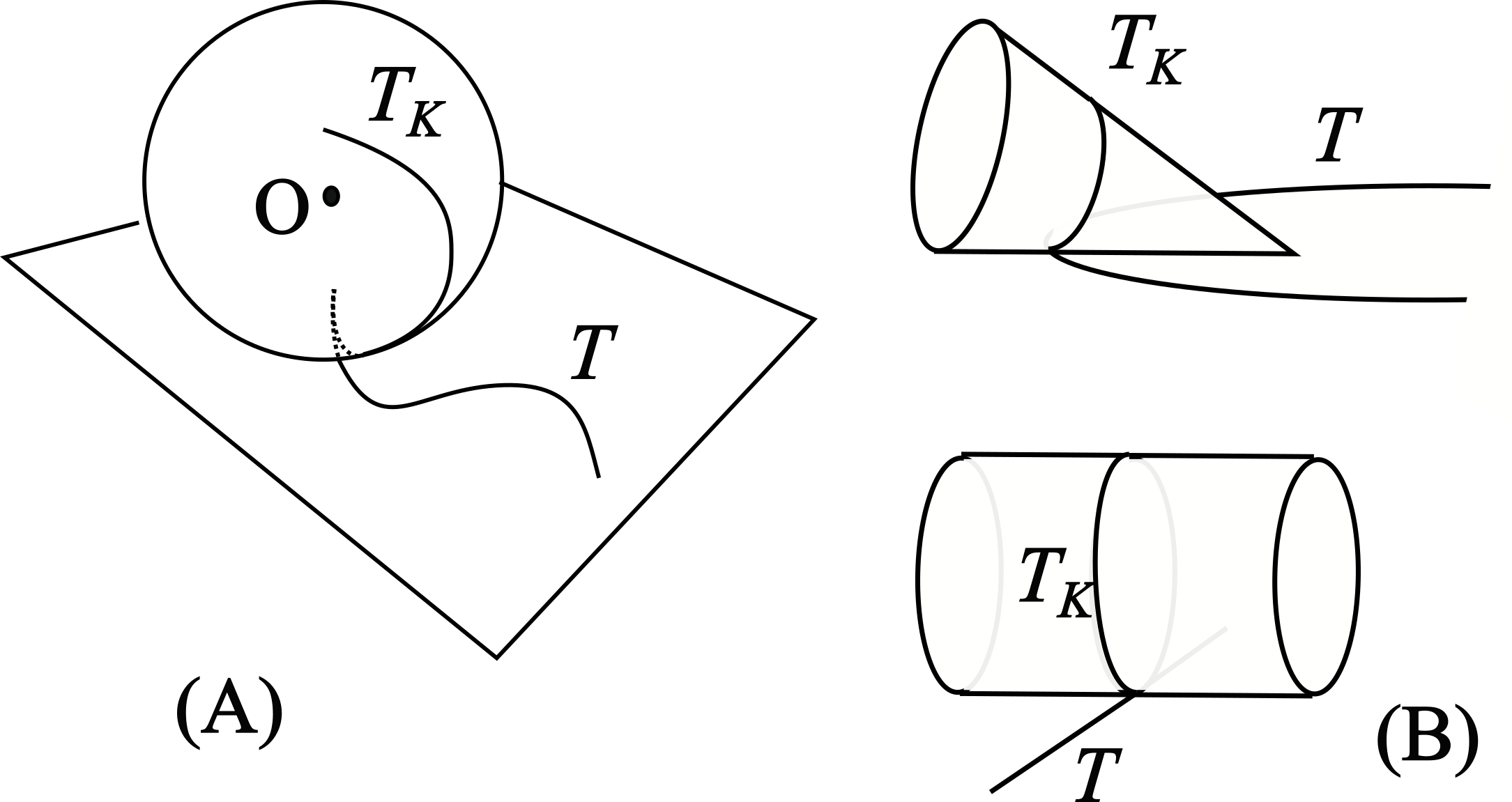}
		\caption{A: the curves $T$ and $T_K$. Right: B: Cylinders and cones are trajectoids of straight, and circular paths, respectively. }
	\label{fig:1}
\end{figure}

The construction method given by \citet{sobolev2023solid} is based on an elegant geometric observation. Consider a sphere $S_K$ of radius $K^{-1}$ with center $O$ rolling on $P$ such that the contact point follows a curve $T$ with curvature-arclength function $k(s)$. Then, the contact point traces another curve $T_K$ on the sphere, whose \emph{geodesic curvature} is also $k(s)$ (Fig. \ref{fig:1}.A). If 
\begin{itemize}
    \item $T$ is an infinite periodic curve with period $T^\star$
    \item the corresponding spherical curve $T_K$ is closed, and contains a finite number of full periods $T_K^\star$.
\end{itemize}
then one can fit tangent half-spaces to $S_K$ at all points of $T_K$ such that $S_K$ is in their interiors. The intersection of these half-spaces forms a convex object, bounded by a ruled surface possibly patched to itself along sharp edges, i.e. the boundary of the object as a whole is piecewise smooth. The object is - just like the original sphere - capable of rolling along $T$ without slip and spin indefinitely. We will refer to such objects as trajectoids of radius $K^{-1}$ corresponding to path $T$. Two simple examples where the closure condition is satisfied are illustrated by Fig. \ref{fig:1}.B. For a straight path, $T_K$ is a great circle for any $K$, and the corresponding trajectoid is an (unbounded) cylinder, whereas a circular path $T$ gives rise to a general circle-shaped $T_K$, and an (unbounded) cone-shaped trajectoid. Both examples are somewhat special. The path corresponding to  the cone does not have translation symmetry. Both trajectoids are unbounded whereas non-straight paths with translation symmetry always generate bounded surfaces. Finally, the cone surface is 'patched' at the isolated tip point due to its rotational symmetry, in contrast to most other trajectoid shapes, where the ruled surface is patched along (curved) edges. 

Ruled surfaces maintain line contact with the underlying plane, and thus their roll motion does not have a spin component as long as friction is sufficiently high and angular velocity of motion is sufficiently low (to prevent lift-off). Ruled surfaces patched together along sharp edges tend to preserve this property at low angular velocities. Moreover the  trajectoid shapes constructed above have a sharp valley of potential energy, which promotes rolling along $T$. These properties enable easy experimental realization of spin-free roll. 

In what follows, this work focuses on the kinematics aspect of the problem, rather than on the dynamics of roll motion. For simplicity it will be assumed that the object is propelled by appropriate external forces in such a way that it never stops, and it maintains line contact at all time.

%Let %$k(s), 0\leq s\leq L$ denote the curvature of 
%$T^\star$ denote one period of the prescribed curve $T$. %as a function of arclength $s$. \citet{sobolev2023solid} show that the original problem is equivalent to finding a real number $r>0$ and a positive integer $n$ such that a sphere of radius $R$ admits a \emph{closed} curve $T_K^*$ of length $nL$, whose geodesic curvature is periodic with period $k(s)$, and a full loop includes exactly $n$ periods. If this property is satisfied than the trajectoid is constructed as follows: one considers a sphere of radius $R$ along with the curve $T_K^*$. In each point of $T_K^*$ one considers a half-space tangent to, and including the sphere. The trajactoid object is the intersection of all half-spaces defined above. This process is pretty much like peeling off a thick layer of an apple by sliding a knife along $T_K^*$.

The observations reviewed so far allowed \cite{sobolev2023solid} to reduce the trajectoid problem to a purely geometric problem: 

\begin{theorem}[\cite{sobolev2023solid}]
If for some path $T$ there exists $K\in\mathbb{R}^+$ such that the corresponding spherical curve $T_K$ is closed and it consists of $n\in \mathbb{N^+}$ full periods $T_K^\star$, then $T$ admits a period-$n$ trajectoid. 
\label{thm:natureMPT0}
\end{theorem}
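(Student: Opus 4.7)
The plan is to take the construction already sketched in the paragraphs preceding the theorem and verify that it delivers all three required properties: contact with $P$ along the prescribed point of $T$, no slip or spin, and period-$n$ closure. I would define the convex body $B$ as the intersection of all closed half-spaces tangent to $S_K$ at the points of $T_K$ that contain $S_K$. By construction $B\supset S_K$, and at every $p\in T_K$ the tangent plane of $S_K$ at $p$ is a supporting plane of $B$ at $p$.

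I would then fix coordinates with $P$ horizontal, place $S_K$ in its initial position with center $O$, rigidly attach $B$ to $S_K$, and let $g_s\in SE(3)$ be the family of rigid motions describing $S_K$ rolling without slip or spin along $T$ parameterized by arclength $s$. At each $s$, the point of $B$ corresponding to $T_K(s)$ sits at the contact point of $g_sS_K$ with $P$, and $P$ is precisely the tangent plane to $S_K$ there, hence a supporting plane of $g_sB$. So $B$ rests on $P$ at the prescribed point of $T$, and because $B$ is glued to $S_K$ the motion inherits zero velocity of the contact point and an angular velocity parallel to $P$. The projection of $O$ traces $T$ because $O$ stays directly above the contact point of $S_K$ throughout.

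For period-$n$ closure I would use the translational symmetry of $T$. Because the rolling angular velocity depends only on the direction of motion of the contact point and not on the sphere's orientation, the net rigid motion produced by rolling $S_K$ through one period of $T$ is a translation by the period vector of $T$ composed with a fixed rotation $R_1\in SO(3)$; equivalently, the next body-frame copy of $T_K^\star$ on $S_K$ is $R_1^{-1}T_K^\star$. The hypothesis that $T_K$ closes up smoothly after exactly $n$ copies of $T_K^\star$ then forces $R_1^{-n}$ to fix both a point of $T_K$ and the tangent direction of $T_K$ at that point, so $R_1^n=I$. Hence after $n$ periods $B$ has returned to its original orientation (translated by $n$ period vectors), which is the period-$n$ trajectoid property. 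The main subtle input, which the review just before the theorem already supplies, is the curvature-matching statement that guarantees the body-frame contact trace on $S_K$ coincides with the prescribed spherical curve $T_K$; once this is in hand, the rest is essentially bookkeeping.
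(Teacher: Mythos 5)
Your proposal is correct and takes essentially the same route as the paper, whose justification of this (cited) theorem is exactly the construction you elaborate: intersect the half-spaces tangent to $S_K$ along the closed curve $T_K$, attach the resulting convex body rigidly to the rolling sphere, and use the fact that $T_K$ consists of $n$ copies of $T_K^\star$ to obtain the period-$n$ behaviour. Your explicit monodromy bookkeeping ($R_1^n=I$) and the supporting-plane verification merely formalize what the paper's informal sketch leaves implicit.
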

 
Furthermore,  with the aid of
\begin{definition}
 Consider a periodic curve $T$ with translation symmetry, and a scalar $x\in\mathbb{R}^+$. If for some $K>0$, 
 \begin{itemize}
     \item [(i)] the  corresponding spherical curve $T_K^\star$ (which is unique up to isometry) with endpoints $A,B$ can be closed by the addition of two great circle segments $s_1$, $s_2$ of equal length, intersecting in point C such that $BCA\angle=x$,
     \item [(ii)] the  signed surface area $S^R_x(K)$ of the enclosed  spherical region $R_{K,x}$ is
     \begin{align}
       S^R_x(K)=\pm K^{-2}x
       \label{eq:areacondition}
     \end{align}
 \end{itemize}   
 then $T^\star$ is said to possess Property $x$.
 \label{def:x}
\end{definition}
a more compact statement is formulated:
\begin{theorem}[\cite{sobolev2023solid}]
If the path $T$ possesses Property $2\pi/n$, then it admits a period-$n$ trajectoid.
\label{thm:natureMPT}
\end{theorem}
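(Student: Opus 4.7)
By Theorem~\ref{thm:natureMPT0}, it suffices to produce, for some $K > 0$, a rigid motion $R \in SO(3)$ of the sphere of radius $K^{-1}$ that carries the starting frame of $T_K^\star$ at $A$ to the ending frame at $B$ and that satisfies $R^n = I$, so that $n$ concatenated copies of $T_K^\star$ form a closed spherical curve. Take $K$ as furnished by Property $2\pi/n$, and let $R_C$ denote the rotation of the sphere by angle $2\pi/n$ about the axis $OC$. The plan is to show $R = R_C$, from which $R^n = R_C^n = I$ is immediate. The equal-arc condition $|s_1| = |s_2|$ puts $A,B$ at the same geodesic distance from $C$, and together with the apex angle $BCA\angle = 2\pi/n$ this gives $R_C(A) = B$. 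Since $R$ also sends $A$ to $B$, the equality $R = R_C$ reduces to checking agreement on one more linearly independent vector: it suffices to show $R_C(T_A) = T_B$, where $T_A, T_B$ are the unit tangents of $T_K^\star$ at its endpoints.

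Let $\vec{u}_X$ ($X \in \{A,B\}$) be the unit tangent at $X$ to the great-circle arc toward $C$, and $N_X$ the outer unit normal. Because $R_C$ is a proper rotation fixing the axis $OC$, it sends the great circle through $\{C,A\}$ to the one through $\{C,B\}$ and preserves outward orientation, so it maps the orthonormal frame $(\vec{u}_A, N_A \times \vec{u}_A)$ at $A$ onto $(\vec{u}_B, N_B \times \vec{u}_B)$ at $B$. Writing $T_X = \cos\psi_X\,\vec{u}_X + \sin\psi_X\,(N_X \times \vec{u}_X)$, the identity $R_C(T_A) = T_B$ becomes the single scalar equation $\psi_A \equiv \psi_B \pmod{2\pi}$.

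To verify this, I apply Gauss--Bonnet to the simply connected region $R_{K,2\pi/n}$ bounded by $T_K^\star \cup s_2 \cup s_1$. The geodesic-curvature integral along $T_K^\star$ equals the total turning $\Theta$ of the planar curve $T^\star$ over one period; the great-circle arcs contribute nothing; the exterior angle at $C$ is $\pi - 2\pi/n$; and the exterior angles at $A,B$ are expressible in terms of $\psi_A, \psi_B$ via the frame above. Substituting the area condition $K^2 S_{2\pi/n}^R(K) = \pm 2\pi/n$ from Definition~\ref{def:x} collapses Gauss--Bonnet (in the $+$ case) to an identity of the form $\psi_A - \psi_B + \Theta \equiv 0 \pmod{2\pi}$. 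The essential closing input is that the translation symmetry of $T$ forces the unit tangent of $T$ to be periodic, whence $\Theta \in 2\pi\mathbb{Z}$; this absorbs the $\Theta$-term and leaves $\psi_A \equiv \psi_B \pmod{2\pi}$ as required. The $-$ case is symmetric, with $C$ replaced by its reflection across the arc $AB$ and $R_C$ by the corresponding $R_{C_-}$, each of order dividing $n$ in $SO(3)$.

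The main technical obstacle is orientation bookkeeping: consistent conventions must be fixed for the traversal direction of the loop, the sign of the signed spherical area, the signs of exterior angles, and the sign of $\psi$, so that the Gauss--Bonnet identity produces exactly $\psi_A \equiv \psi_B$ rather than a shifted variant. Once these are pinned down, the core of the proof is a short Gauss--Bonnet calculation combined with the translation-symmetry observation $\Theta \in 2\pi\mathbb{Z}$.
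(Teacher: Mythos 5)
Your proposal is correct and follows essentially the same route as the paper: the paper likewise reduces to Theorem~\ref{thm:natureMPT0} by rotating $R_{K,2\pi/n}$ about the axis $OC$ by multiples of $2\pi/n$ and invoking Gauss--Bonnet together with the area condition \eqref{eq:areacondition} to conclude that consecutive copies of $T_K^\star$ connect smoothly. Your frame/holonomy formulation (showing the monodromy rotation equals $R_C$, with the explicit observation $\Theta\in2\pi\mathbb{Z}$ from translation symmetry) is just a more detailed unpacking of the paper's one-paragraph argument.
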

It is easy to see that Theorem \ref{thm:natureMPT0} implies  Theorem \ref{thm:natureMPT}. If $T$ has Property $2\pi/n$ (see Fig. \ref{fig:2}) then one can rotate $R_{K,x}$ around the axis $OC$ by angles $2i\cdot\pi/n$ $(i=0,1,2,...,n-1)$ thereby creating $n$ copies of $R_{K,x}$. The union of these regions is a region of area $2K^{-2}\pi$ bounded by $n$ copies of $T_K^*$.  The boundary curves connect smoothly by the Gauss-Bonnet theorem hence the conditions of Theorem \ref{thm:natureMPT0} are fulfilled.

\begin{figure} \centering
        \includegraphics[height=4cm]{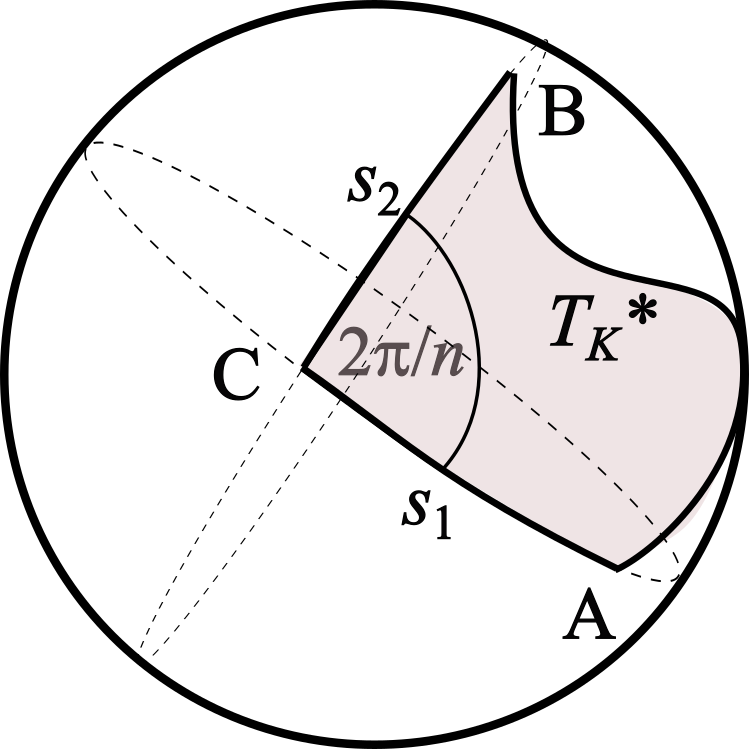}
		\caption{A sphere of radius $K^{-1}$ with the curve $T_K^\star$. The circle segments  BC and AC are of equal length, and belong to great circles (depicted as dashed curves). Property $2\pi/n$ means that the signed area of the shaded spherical region $R_{K,2\pi/n}$ equals $2K^{-2}\pi/n$. }
	\label{fig:2}
\end{figure}

It was also demonstrated by \citet{sobolev2023solid}  that most paths $T$ do not possess Property $2\pi/1$, however for $n\geq 2$, Property $2\pi/n$ can be verified numerically for many $T$ by root-finding of a scalar nonlinear equation.  The authors also conjectured that paths without Property $2\pi/n$ ($n\geq 2$) are rare, but did not prove that. Nor did they answer if there are paths that do not possess Property $2\pi/n$ for any value of $n$.
Here we prove that for any twice differentiable path, there exists a positive integer $N$, such that the path possesses Property $2\pi/n$ for all $n\geq N$, and thus all paths admit period-$n$ trajectoids.

\section{A new existence theorem of trajectoids}

\subsection{Basic idea of the new existence theorem}

We will develop a general existence theorem of trajectoids by  showing first that for sufficiently small $K,x>0$, the region $R_{K,x}$ described in point (i) of Definition \ref{def:x} exists. %In what follows, this region will be denoted by $R_{K,x}$. 
In order to investigate the area of $R_{K,x}$, it is  decomposed to two parts (which may partially overlap): 
\begin{itemize}
   \item $D_K$: the region bounded by  $T_K^\star$ and the shorter great arc $BA$, which has signed area $S^{D}(K)$.
    \item $\Delta_{K,x}$: a spherical triangle of signed area $S^\Delta_x(K)$, with vertices $A,B,C$.  
\end{itemize}
Consequently, the signed areas satisfy
\begin{align}
   S_{x}^R(K)=S^\Delta_x(K)+S^{D}(K).
   \label{eq:Ank}
\end{align}

We will also establish conditions of uniqueness of the region $D_K$, as well as bounds on the area $S^\Delta_x(K)$.  These results enable us to verify the existence of values $K$,$n$ such that \eqref{eq:areacondition} is satisfied with $x=2\pi/n$. Then, path $T$ indeed admits a trajectoid by Theorem \ref{thm:natureMPT}.

\subsection{Uniqueness of $D_K$ }
 $D_K$ is uniquely defined, unless $A,B$ are antipodal. If, A and B are antipodal for some $K$ it is easy to prove that $T$ possesses Property $2\pi/2$. However for the sake of simplicity, we will not discuss this case separately. Instead, we  require $K<K^{(1)}$ with 
$K^{(1)}=\pi L^{-1}$. Under this assumption, $A,B$ may not be antipodal points. 

\subsection{Existence of $\Delta_K$}
 
 We first establish the exact conditions of existence of $\Delta_K$:

\begin{lemma}
Assume that $A$, and $B$ are two points on a sphere of radius $K^{-1}$ with spherical distance $\wideparen{AB}=d$, and $\pi>\alpha>0$. 
\begin{itemize}
   \item If $Kd\leq \alpha$, there are two points $C_1$, $C_2$ on the sphere such that $ABC_i$ form congruent isosceles spherical triangles with $AC_i=BC_i$, $C_iAB\angle\leq\pi/2$, and  $BC_iA\angle=\alpha$.  
   \item if $Kd> \alpha$ no such point exists. 
\end{itemize}
\label{lem:tri}
\end{lemma}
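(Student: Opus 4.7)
The plan is to parametrize the isosceles triangles $ABC$ by the common side length $a = \wideparen{AC} = \wideparen{BC}$ and turn the existence question into a single scalar equation in $a$. Let $R = K^{-1}$. By the spherical law of cosines applied at the apex $C$,
\begin{equation*}
\cos(Kd) \;=\; \cos^2(Ka) + \sin^2(Ka)\cos\alpha,
\end{equation*}
which, after the substitutions $1-\cos(Kd) = 2\sin^2(Kd/2)$ and $1-\cos\alpha = 2\sin^2(\alpha/2)$, simplifies to the clean identity
\begin{equation*}
\sin(Ka)\,\sin(\alpha/2) \;=\; \sin(Kd/2).
\end{equation*}

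Next I would use this identity to handle existence. Since $\alpha \in (0,\pi)$ and (by the paragraph on uniqueness of $D_K$) we may assume $Kd < \pi$, both $\alpha/2$ and $Kd/2$ lie in $(0,\pi/2)$, so the equation has a solution $Ka \in (0,\pi)$ if and only if $\sin(Kd/2) \le \sin(\alpha/2)$, i.e.\ iff $Kd \le \alpha$. This is exactly the dichotomy in the statement, and proves the "only if" part of the second bullet.

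When $Kd \le \alpha$ there are generically two solutions for $Ka$ in $(0,\pi)$, namely $Ka$ and $\pi - Ka$, complementary across $\pi/2$. To pick out the correct branch I would compute the base angle $\beta = \angle CAB$ via the spherical law of cosines applied at $A$: a short manipulation gives
\begin{equation*}
\cos\beta \;=\; \frac{\tan(Kd/2)}{\tan(Ka)},
\end{equation*}
so the constraint $\beta \le \pi/2$ is equivalent to $Ka \le \pi/2$. This selects a unique value of $a$. The two congruent triangles $C_1, C_2$ of the lemma then arise exactly by reflecting $C$ across the great circle through $A$ and $B$: given the selected $a$, there are precisely two points at spherical distance $a$ from both $A$ and $B$, related by this reflection.

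The main routine step is the trigonometric reduction; the only genuinely delicate point is the correct handling of the inequality $\beta \le \pi/2$ and the boundary case $Kd = \alpha$ (where the two solutions coalesce at $Ka = \pi/2$ and $\beta = \pi/2$, still yielding a valid pair $C_1, C_2$ by reflection). I would also note briefly that the assumption $\alpha < \pi$ ensures $\sin(\alpha/2) > 0$ so that division is legal, and $Kd > 0$ (since $A \ne B$) guarantees $a > 0$ so that $\Delta_{K,x}$ is non-degenerate.
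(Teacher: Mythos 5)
Your proof is correct, and like the paper's it is elementary spherical trigonometry; the difference is one of direction and explicitness, and the comparison is instructive. The paper argues by an ``inverse construction'': it fixes the apex $C$, lays off equal legs along two great circles meeting at angle $\alpha$, asserts (leaving the computation to the reader) that legs of length $\arcsin(dK\alpha^{-1})K^{-1}$ produce base $d$, and disposes of $Kd>\alpha$ by noting that an isosceles triangle with apex angle $\alpha$ has base at most $\alpha K^{-1}$. You instead solve forwards for the leg: the law of cosines at the apex gives the exact relation $\sin(Ka)\sin(\alpha/2)=\sin(Kd/2)$, which delivers the existence threshold $Kd\le\alpha$ and the nonexistence statement in one stroke. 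Your route buys two things. First, it shows the correct leg length is $K^{-1}\arcsin\bigl(\sin(Kd/2)/\sin(\alpha/2)\bigr)$, which coincides with the value $K^{-1}\arcsin(dK/\alpha)$ asserted in the paper's proof only at $Kd=\alpha$ and in the small-angle limit; so your identity in effect corrects that explicit formula (the lemma itself is unaffected, since only existence of a suitable leg is needed). Second, via $\cos\beta=\tan(Kd/2)\cot(Ka)$ you verify the clause $C_iAB\angle\le\pi/2$ of the statement, which the paper's proof does not address. The only steps you leave implicit are routine: that the two circles of spherical radius $a$ about $A$ and $B$ meet in exactly two points off the great circle through $A,B$ (this follows since $\alpha<\pi$ forces $\sin(Ka)>\sin(Kd/2)$, i.e.\ $a>d/2$), and that the triangle with sides $a,a,d$ so obtained has apex angle exactly $\alpha$ (the angle is determined by the three sides and lies in $(0,\pi)$). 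Neither affects correctness.
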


\begin{proof}
 Spherical symmetry allows us to verify the first statement by  an 'inverse construction' where we first fix a point $C$ and then find the relative positions of the points $A,B$. Consider a sphere of radius $K^{-1}$, and two great circles $c_A$, $c_B$ with intersection angle $\alpha$ at point C. Mark point A, and B on $c_A$, and $c_B$ respectively such that $ACB\angle=\alpha$, and  the spherical distances of A and B from C are $\wideparen{AC}=\wideparen{BC}=\arcsin(dK\alpha^{-1})K^{-1}$. The positions of the pair $A,B$ can be  specified in two different ways, symmetrically. It can be verified by direct calculation, that  $\wideparen{AB}=d$ in both cases, which completes the proof. The second statement can be verified using proof by contradiction. One fixes $C$ and the circles $c_{A},c_{B}$. No matter how we choose A,B on the respective circles, $\wideparen{AC}=\wideparen{AB}$, and $ACB\angle=\alpha$ imply $\wideparen{AB}\leq\alpha K^{-1}$. Hence $\Delta_x$ with the desired properties does not exist.
\end{proof}

\subsection{The spherical distance of $A$ and $B$, and the area of $\Delta_{K}$}

Let $d_T(K)$ denote the spherical distance of points $A$ and $B$. According to Lemma \ref{lem:tri}, the existence of $\Delta_K$ requires that $Kd_T(K)$ is sufficiently small. In the limit of $K\to 0$, $d_T$  has a finite, positive limit $d_T(0)$, which is simply the Euclidean distance between the endpoints of the planar curve $T^\star$. %(i.e. $|\mathbf{t}|$). 
Thus, 
\begin{align}
\left.\left[Kd_T(K)\right]\right|_{K=0}=0\label{eq:dTKK0}\\
    \left.\frac{d}{dK}\left[Kd_T(K))\right]\right|_{K=0}=d_T(0)>0
\end{align}
Under our smoothness assumptions, $d_T$ has continuous first derivative, thus there exists a value $K^{(2)}>0$ such that for any $0<K\leq K^{(2)}$,
$d_T(K)K$ is positive and monotonically increasing.

Let
\begin{align}
K^{(0)}&=\min(K^{(1)},K^{(2)})
\label{eq:K0}\\
X&= d_T(K^{(0)})K^{(0)})
\label{eq:X}
\end{align}
%where $\lfloor.\rf$ stands for the floor function. 

Then
\begin{lemma}
 For any $0<x\leq X$, there exists a unique value $0<K_x\leq K^{(0)}$ such that
 \begin{align}
     d_T(K_x)K_x=x \label{eq:dTKn}\\
     |S_{x}^\Delta(K_x)|= K_x^{-2}x,
 \label{eq:Adeltabound}
 \end{align} 
 furthermore
$\Delta_{K,x}$ exists for all $0\leq K\leq K_x$ .
\label{lem:dTK}
\end{lemma}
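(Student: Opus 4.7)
The plan is to combine the monotonicity of $K\mapsto Kd_T(K)$ established immediately above the lemma with a direct computation of the triangle area in the boundary case of Lemma~\ref{lem:tri}.

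First, I would note that $Kd_T(K)$ is continuous on $[0,K^{(0)}]$, vanishes at $K=0$ by \eqref{eq:dTKK0}, reaches the value $X$ at $K=K^{(0)}$ by \eqref{eq:X}, and is strictly increasing throughout this interval (since the conditions $K^{(0)}\leq K^{(2)}$ guarantees monotonicity). The intermediate value theorem combined with strict monotonicity then produces a unique $K_x\in(0,K^{(0)}]$ satisfying \eqref{eq:dTKn} for every $x\in(0,X]$.

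Next, to verify that $\Delta_{K,x}$ exists for all $K\in[0,K_x]$, I would apply Lemma~\ref{lem:tri} with $\alpha=x$. Monotonicity yields $Kd_T(K)\leq K_xd_T(K_x)=x$ throughout $[0,K_x]$, so the hypothesis of the first bullet of Lemma~\ref{lem:tri} is satisfied and the congruent pair of isosceles spherical triangles exists.

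The most substantive step is the area identity \eqref{eq:Adeltabound} at $K=K_x$. I would invoke the explicit formula from the proof of Lemma~\ref{lem:tri}, which gives $\wideparen{AC}=\wideparen{BC}=\arcsin(d_T(K_x)K_x\,x^{-1})\,K_x^{-1}$. Since $d_T(K_x)K_x=x$ by \eqref{eq:dTKn}, the argument of the arcsine equals $1$ and hence $\wideparen{AC}=\wideparen{BC}=(\pi/2)\,K_x^{-1}$. Geometrically this means $C$ is a pole of the great circle through $A$ and $B$, so the base angles at $A$ and $B$ are both right angles. The spherical excess formula then delivers the unsigned area $(x+\pi/2+\pi/2-\pi)\,K_x^{-2}=x\,K_x^{-2}$, which matches $|S_x^\Delta(K_x)|=K_x^{-2}x$.

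I do not expect a serious obstacle: the statement is essentially a consequence of monotonicity together with the boundary case of Lemma~\ref{lem:tri}, where the isosceles triangle happens to have both base angles equal to $\pi/2$. The only delicate point is matching sign conventions between the signed area $S_x^\Delta$ and the unsigned spherical excess, which is handled by passing to absolute values in \eqref{eq:Adeltabound}.
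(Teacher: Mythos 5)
Your proposal is correct, and its skeleton (intermediate value theorem plus monotonicity of $Kd_T(K)$ for existence and uniqueness of $K_x$, then Lemma~\ref{lem:tri} for the existence of $\Delta_{K,x}$ on all of $[0,K_x]$) coincides with the paper's proof. Where you genuinely diverge is the area identity \eqref{eq:Adeltabound}: the paper derives the general closed-form expression \eqref{eq:Adelta} for $S^\Delta_x(K)$ at every admissible $K$ via the spherical law of cosines and then reads off the value at $K=K_x$, whereas you exploit the fact that $K=K_x$ is exactly the boundary case of Lemma~\ref{lem:tri}, where $\wideparen{AC}=\wideparen{BC}=(\pi/2)K_x^{-1}$, so $C$ is a pole of the great circle through $A$ and $B$, the base angles are both $\pi/2$, and the spherical excess immediately gives $(x+\pi/2+\pi/2-\pi)K_x^{-2}=xK_x^{-2}$. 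Your route is more elementary and makes the boundary value transparent without any trigonometric identity (note that the paper's formula \eqref{eq:Adelta} indeed reduces to yours at $d_T(K_x)K_x=x$, since the arcsine argument becomes $1$); the paper's route buys a formula for $S^\Delta_x(K)$ on the whole interval, which is more information than the lemma strictly needs but documents how the triangle area behaves away from $K_x$. Your handling of the sign issue by passing to absolute values matches the statement, and the choice between the two mirror-image apices $C_1,C_2$ is deferred to the main theorem in the paper as well, so nothing is missing.
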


\begin{proof}
If $0<x\leq X$, then 
$d_T(K^{(0)})K^{(0)})\geq x$ by \eqref{eq:X},   
whereas \eqref{eq:dTKK0} implies
$d_T(0)\cdot 0< x$.
Continuity of $d_T(K)K$ implies \eqref{eq:dTKn} via the Intermediate Value Theorem. Monotonicity of $d_T(K)K$, and Lemma \ref{lem:tri} imply the existence of $\Delta_K$ if $K\leq K_x$.  Finally, The Theorem of Spherical Cosines, and the area formula of spherical triangles yield
\begin{align}
 S_{x}^\Delta(K)=
K^{-2}\left[
2\arcsin\sqrt{\frac{1+\cos(x))}{1+\cos(d_T(K)K)}}+x-\pi\right ],
\label{eq:Adelta}
\end{align}
implying 
\eqref{eq:Adeltabound}.
\end{proof}

\subsection{The new existence theorem}

We are now ready to formulate the main result of this work:

\begin{theorem}
For every twice differentiable, periodic path $T$ with translation symmetry, 
there exists a threshold $N$ such that for all $n\geq N$, $n\in\mathbb{N}$, $T$ admits a period-$n$ trajectoid. 
\end{theorem}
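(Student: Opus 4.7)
The plan is to invoke Theorem~\ref{thm:natureMPT} by verifying Property $2\pi/n$ for every sufficiently large $n$. Fix $n$ with $x:=2\pi/n\leq X$, where $X$ is the constant from Lemma~\ref{lem:dTK}. That lemma supplies a distinguished radius $K_x\in(0,K^{(0)}]$ at which $|S^\Delta_x(K_x)|=K_x^{-2}x$, and it guarantees that $\Delta_{K,x}$ continues to exist throughout $(0,K_x]$. On this interval I will let $K$ vary and apply the Intermediate Value Theorem to locate a $K$ at which the signed-area identity \eqref{eq:areacondition} holds.

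Lemma~\ref{lem:tri} provides two admissible vertices $C_1,C_2$ yielding signed triangle areas $\pm S^\Delta_x(K)$, where $+S^\Delta_x(K)$ denotes the branch computed by \eqref{eq:Adelta}. Using the decomposition \eqref{eq:Ank} I set
\[
g_\pm(K)\;:=\;\pm S^\Delta_x(K)+S^{D}(K)\mp K^{-2}x,\qquad K\in(0,K_x],
\]
so that any zero of $g_+$ or $g_-$ certifies Property $2\pi/n$ with the corresponding sign in \eqref{eq:areacondition}. At the endpoint $K=K_x$ the identity $S^\Delta_x(K_x)=K_x^{-2}x$ forces exact cancellation, giving
\[
g_+(K_x)\;=\;g_-(K_x)\;=\;S^D(K_x).
\]
As $K\to 0^+$, a direct expansion of \eqref{eq:Adelta} (using $d_T(K)K\to 0$ and $d_T(0)>0$) shows that $S^\Delta_x(K)$ tends to the finite planar triangle area $\tfrac14 d_T(0)^2\cot(x/2)$, while $S^D(K)$ tends to the finite planar area enclosed between $T^\star$ and its endpoint chord. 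The remaining term $\mp K^{-2}x$ diverges, producing
\[
\lim_{K\to 0^+}g_+(K)\;=\;-\infty,\qquad \lim_{K\to 0^+}g_-(K)\;=\;+\infty.
\]

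Since $S^\Delta_x$ and $S^D$ depend continuously on $K$ throughout $(0,K_x]$, the functions $g_\pm$ are continuous. If $S^D(K_x)\geq 0$, then $g_+$ is nonnegative at $K_x$ and tends to $-\infty$, so the Intermediate Value Theorem yields a zero $K^\ast\in(0,K_x]$; if $S^D(K_x)\leq 0$, the symmetric argument applied to $g_-$ produces such a zero. Either way $T$ possesses Property $2\pi/n$ at $K=K^\ast$, and Theorem~\ref{thm:natureMPT} delivers a period-$n$ trajectoid, so $N:=\lceil 2\pi/X\rceil$ works. The main obstacle I anticipate is the sign bookkeeping: verifying that the two branches of Lemma~\ref{lem:tri} really contribute signed areas of opposite sign (so that the cancellation at $K=K_x$ is exact), together with continuity of $S^D(K)$ in $K$ under the twice-differentiability hypothesis on $T$—both routine, but essential for the IVT step.
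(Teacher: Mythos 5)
Your proposal is correct and follows essentially the same route as the paper: the same decomposition \eqref{eq:Ank}, the endpoint $K_x$ from Lemma \ref{lem:dTK} where $|S^\Delta_x|$ matches $K_x^{-2}x$, the finiteness of both areas as $K\to 0$ against the divergence of $K^{-2}x$, and an Intermediate Value Theorem step on $(0,K_x]$, with the sign issue resolved by choosing between the mirror-image vertices $C_1,C_2$. Your $g_\pm$ bookkeeping is just a repackaging of the paper's choice \eqref{eq:sign} of the triangle whose signed area reinforces $S^D(K_x)$, so there is no substantive difference.
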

\begin{proof}

 According to Lemma \ref{lem:dTK}, $\Delta_{K_x,x}$ exists for all $x\leq X$, moreover there are two points $C_1$, $C_2$ on the sphere, which can be used to construct the region $\Delta_{K_x,x}$ according to Lemma \ref{lem:tri}. The corresponding two spherical triangles are mirror images of one another with respect to the $OAB$ plane, hence their signed spherical areas have the same magnitudes but opposite signs. 
 
 If $S^D(K_x)\neq 0$, then it is assumed without loss of generality that the signed area of $BC_1A$ satisfies
 \begin{align}
     \sign (S^D(K_x)) =\sign (S^\Delta_x(K_x)).
     \label{eq:sign}
 \end{align}
 We use point $C_1$ to construct the region $\Delta_{K,x}$ for each $0\leq K \leq K_x$. Then, \eqref{eq:Ank}, \eqref{eq:Adeltabound}, and \eqref{eq:sign} imply
$$
|S_{x}^R(K_x)|\geq K_x^{-2}x.
$$
At the same time we have seen that $S^D(0)$, $S^\Delta (0)$ are finite, i.e.
$$
|S_{x}^R(0)|=|S^D(0)+S^\Delta_x(0) |< \lim_{K\to 0} K^{-2}x=\infty .
$$
By continuity of $|S_x^R(K)|$, there exists a scalar $0< Q_x < K_x$ for which \begin{align}
  |S_x^R(Q_x)|=Q_x^{-2}x. 
  \label{eq:AxQx}
\end{align}

Let $n\geq2\pi/X$ be an arbitrary integer, $x=2\pi/n$, and $K=Q_x$. Then, \eqref{eq:AxQx} implies that the condition \eqref{eq:areacondition} is satisfied. Hence $T$ possesses Property $2\pi/n$, and a period-$n$ trajectoid of inverse radius $Q_x$ exists by Theorem \ref{thm:natureMPT}.
\end{proof}

%%%%%%%%%%%%%%%%%%%%%%%%%%%%%

\section{Some related problems}

 Our work sheds light on some intriguing geometric problems:
 \begin{problem}
Consider a finite, planar curve $T^\star$, Let $T_K^\star$ denote another curve on a sphere of radius $K^{-1}$ with the same geodesic curvature and let $d_T(K)$ be the spherical distance of its endpoints. Then
\begin{enumerate}
    \item How van one determine the largest value of the function $d_T(K)K$?
    \item How van one determine the largest value of $x$ for which $T$ possesses Property $x$?
    \item Are these values equal?
\end{enumerate}
\end{problem} 
We have seen that both quantities play crucial roles in the construction of trajectoids. However the first question has other interpretations, too. For example, 
imagine that an airplane departs from the North Pole of a spherical planet with unknown radius $K^{-1}$. It follows a prescribed itinerary $T^*$, i.e. a finite path with prescribed length and geodesic curvature function. Which are those latitudes of the planet, which can be reached at the end of this journey for some value of $K$?

We also mention an interesting property of trajectoids. Closed spherical curves $T_K$ corresponding to a trajectoid (cf. Theorem \ref{thm:natureMPT0})  cut the  sphere to two parts of equal area \citep{segerman2021rolling,sobolev2023solid}. Every area-halving curve is the derivative of a family of arbitrary closed spherical curves \citep{arnol1995geometry}. What does this property say about the motion of trajectoids? It is easy to see that roll motion is balanced in the following sense. Imagine that a trajectoid is filled with a highly viscous liquid suspension containing heavy particles, such that the particles continuously sink vertically with constant velocity. The cited result of \citep{arnol1995geometry} implies that the velocity profile of rolling can be chosen in such a way that this suspension remains mixed indefinitely, i.e. the particles do not accumulate on one side of the container. 
\bibliography{citations}

\end{document}